\tikzset{%
	symbol/.style={%
		draw=none,
		every to/.append style={%
			edge node={node [sloped, allow upside down, auto=false]{$#1$}}}
	}
}
\newtheorem{Theorem}{Theorem}
\newtheorem{proposition}[Theorem]{Proposition}
\newtheorem{lemma}[Theorem]{Lemma}
\newtheorem{corollary}[Theorem]{Corollary}
\theoremstyle{definition}
\newtheorem{example}[Theorem]{Example}
\newtheorem{remark}[Theorem]{Remark}
\newtheorem{definition}[Theorem]{Definition}
\DeclareMathOperator{\Bscr}{\mathscr{B}}
\DeclareMathOperator{\Cscr}{\mathscr{C}}
\DeclareMathOperator{\Pscr}{\mathscr{P}}
\DeclareMathOperator{\Cbb}{\mathbb{C}}
\DeclareMathOperator{\Nbb}{\mathbb{N}}
\DeclareMathOperator{\Rbb}{\mathbb{R}}
\DeclareMathOperator{\Acal}{\mathcal{A}}
\DeclareMathOperator{\Bcal}{\mathcal{B}}
\DeclareMathOperator{\Ccal}{\mathcal{C}}
\DeclareMathOperator{\Lcal}{\mathcal{L}}
\DeclareMathOperator{\Pcal}{\mathcal{P}}
\DeclareMathOperator{\Tcal}{\mathcal{T}}
\DeclareMathOperator{\Meas}{Meas}
\DeclareMathOperator{\Top}{\mathbf{Top}}
\DeclareMathOperator{\Set}{\mathbf{Set}}
\DeclareMathOperator{\R}{\Rbb}
\DeclareMathOperator{\C}{\Cbb}
\DeclareMathOperator{\N}{\Nbb}
\DeclareMathOperator{\Grp}{\mathbf{Grp}}
\DeclareMathOperator{\Ring}{\mathbf{Ring}}
\DeclareMathOperator{\Monad}{\mathbf{Monad}}
\DeclareMathOperator{\Lawvere}{\mathbf{Lawvere}}
\DeclareMathOperator{\inv}{inv}
\DeclareMathOperator{\id}{id}
\DeclareMathOperator{\negg}{neg}
\DeclareMathOperator{\Var}{\mathbf{Var}}
\DeclareMathOperator{\Spec}{Spec}
\DeclareMathOperator*{\colim}{colim}
\let\emptyset\varnothing
\let\epsilon\varepsilon
\numberwithin{Theorem}{section}
\numberwithin{equation}{section}
\title{Measurable Functions and Topological Algebra}
\author{Geoff Vooys}
\date{\today}
\begin{document}
	
	\begin{abstract}
In this paper we show that if $(X,\mathcal{A})$ is a measurable space and if $Y$ is a topological model of a Lawvere theory $\mathcal{T}$ equipped with $\mathcal{B}$ the Borel $\sigma$-algebra on $Y$, then the set of $\mathcal{B}$-measurable functions from $X$ to $Y$, $\operatorname{Meas}(X,Y)$, is a set-theoretic model of $\mathcal{T}$. As a corollary we give short proofs of the facts that the set of real-valued measurable functions on a measurable space $X$ is a ring and the set of complex-valued measurable functions from $X$ to $\mathbb{C}$ is a ring.
	\end{abstract}
	
	\subjclass{Primary 28E15; Secondary 18C10, 18C40}
	\keywords{Measurable Spaces, Borel Algebra, Lawvere Theories, Topological Algebra, Algebraic Theories}
	
	\maketitle
	\tableofcontents

	\section{Introduction}
	
	Without much exaggeration, measure theory is as important to modern analysis as the theory of modules is to modern algebra. It is a fundamental tool in functional analysis, statistics, representation theory, harmonic analysis, geometric group theory, and even in number theory. In fact any time a question can be phrased in terms of volumes it probably has a formulation or approach using measure-theoretic tools. However, while measure theory itself is mostly applied in analytic contexts due to its relationship with integration, there is a very set-theoretic algebraic flavour in the foundations and many of the basic results for measure theory. Despite this, it is often an arduous journey for many algebraically-minded graduate students to work through many of the basic measure-theoretic results needed for representation theory and number theory. 
	
	In this paper, however, we show that this difficulty need not be the case. We prove a categorical-algebraic result that gives, if the mathematician writing the proof is willing to be brief and lean on some ``apply Theorem $xyz$ to $\Tcal_{\Ring}$ with the observation...'' in their writing, a one-sentence proof of the fact that for any measurable set $(X,\Acal)$ the set $\Meas(X,\R;\text{Borel})$ of Borel measurable functions is a ring. The trade-off here is that we use some categorical algebraic concepts and prove a much stronger statement using Lawvere theories in order to avoid the analytic perspective and provide a structural proof indicating exactly what is used to prove that $\Meas(X,\R;\text{Borel})$ is a ring. However, this gives an $\epsilon$-free method for proving algebraic properties of sets of measurable functions with values in $\R$.
	
	\subsection{The Structure of the Paper}
	
	In Section \ref{Section: Review Measure} we give a short review of some of the basics of measurable spaces. Because there are many good textbook accounts of measure theory in the literature, we focus instead on when our measurable space $(X,\Acal)$ is the Borel algebra of a topological space $X$. Section \ref{Section: Review Lawvere} gives a quick review of (infinitary) Lawvere theories following the construction of \cite{brandenburg2021large}. The focus we give is more on definitions and examples than in proving and providing general theorems. Finally in Section \ref{Section: Fun} we prove the main theorem of our paper, presented below, and then the following two corollaries.
	
	\begin{Theorem}
		Let $\Tcal$ be a Lawvere theory and let $Y$ be a topological model of $\Tcal$, i.e., $Y \in \Tcal(\Top)_0$. Then if $(X, \Acal)$ is a measurable space, the set
	\[
	\Meas(X,Y;\Bcal_X) = \lbrace f:X \to Y \; | \; f\,\text{is}\,\Bcal_Y\textnormal{-measurable}\rbrace
	\]
	is an object in $\Tcal(\Set)_0$.
	\end{Theorem}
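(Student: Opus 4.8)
The plan is to realise $\Meas(X,Y;\Bcal_X)$ as a \emph{submodel} of the $\Tcal$-model consisting of \emph{all} functions $X \to Y$, this larger model being obtained from the topological model $Y$ by powering it up along the set $X$ and then discarding the topology. Write $M_Y \colon \Tcal \to \Top$ for the product-preserving functor that witnesses $Y \in \Tcal(\Top)_0$, so that $M_Y$ carries the distinguished object of $\Tcal$ to $Y$. Composing $M_Y$ with the forgetful functor $U \colon \Top \to \Set$ and then with $(-)^X \colon \Set \to \Set$ --- both of which preserve all small products --- gives a product-preserving functor $\Tcal \to \Set$, that is, an object of $\Tcal(\Set)_0$, whose underlying set is $\Set(X,UY)$, the set of all functions from $X$ to $Y$. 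First I would record that the operations of this $\Set$-model are pointwise: for an arity $n$ and a morphism $\omega$ of $\Tcal$ from the $n$-fold power of the distinguished object to the distinguished object, the corresponding operation sends $(g_1,\dots,g_n)$ to $M_Y(\omega)\circ\langle g_1,\dots,g_n\rangle$, where $\langle g_1,\dots,g_n\rangle\colon X\to Y^n$ is $x\mapsto(g_1(x),\dots,g_n(x))$ and --- this is the point where the topology enters --- $M_Y(\omega)\colon Y^n\to Y$ is \emph{continuous}, since $M_Y$ carries the product $Y^n$ to the categorical, hence topological, product in $\Top$.

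The second step is to check that $\Meas(X,Y;\Bcal_X)\subseteq\Set(X,UY)$ is closed under every one of these operations (over all arities), and then to use the routine fact that a subset of the underlying set of a $\Set$-model of $\Tcal$ which is closed under all operations of $\Tcal$ is itself, canonically, a $\Set$-model of $\Tcal$. Closure under the operation attached to $\omega$ is precisely the assertion that if $f_1,\dots,f_n\colon X\to Y$ are $\Bcal_Y$-measurable then so is the composite $X\xrightarrow{\langle f_1,\dots,f_n\rangle}Y^n\xrightarrow{M_Y(\omega)}Y$. I would establish this by the familiar two-step route: $\langle f_1,\dots,f_n\rangle$ is measurable into $Y^n$ equipped with a suitable $\sigma$-algebra built from $\Bcal_Y$, the continuous map $M_Y(\omega)$ is Borel measurable, and measurable maps compose. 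Together with the first step this exhibits $\Meas(X,Y;\Bcal_X)$ as an object of $\Tcal(\Set)_0$.

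The step I expect to be the main obstacle is reconciling the two $\sigma$-algebras appearing in that route: $\langle f_1,\dots,f_n\rangle$ is plainly measurable into $(Y^n,\Bcal_Y^{\otimes n})$, but continuity of $M_Y(\omega)$ only yields measurability for the Borel $\sigma$-algebra $\Bcal_{Y^n}$ of the product \emph{topology}, which in general is strictly finer than the product $\sigma$-algebra $\Bcal_Y^{\otimes n}$. Put another way, the Borel functor from topological spaces to measurable spaces does not preserve products in general, so one cannot simply compose product-preserving functors all the way down to $\Set$. The situation is clean for a finitary Lawvere theory with $Y$ second countable --- which is exactly what the corollaries about $\R$ and $\Cbb$ require --- because then $\Bcal_{Y^n}=\Bcal_Y^{\otimes n}$ for every finite $n$ and the two-step argument goes through verbatim; for the theorem in full generality I would lean on the discussion of Borel $\sigma$-algebras of products in Section~\ref{Section: Review Measure} to keep the tuple maps measurable for the $\sigma$-algebra actually needed.
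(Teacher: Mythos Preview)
Your approach is essentially the paper's: the operations on $\Meas(X,Y;\Bcal_Y)$ are the pointwise ones $(f_i)_{i\in I}\mapsto M_Y(\omega)\circ\langle f_i\rangle_{i\in I}$, and everything hinges on showing that this composite is again Borel measurable. The paper packages it slightly differently --- it defines the functor $D^I \mapsto \Meas(X, Y^I; \Bcal_{Y^I})$ directly and checks it is product-preserving, rather than carving out a submodel of $\Set(X,UY)$ --- but the substance is the same.

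Where you part ways is exactly at the obstacle you flag. The paper does \emph{not} route through the product $\sigma$-algebra $\Bcal_Y^{\otimes n}$ and never invokes second countability. Instead, Lemma~\ref{Lemma: Pairing map of measurable functions is meas} argues directly that the pairing $\langle f_i \rangle_{i \in I}\colon X \to Y^I$ is $\Bcal_{Y^I}$-measurable: a basic open $\prod_{i} U_i$ in the product topology has $U_i = Y$ for all but finitely many $i$, so its preimage under the tuple map is the \emph{finite} intersection $\bigcap_{U_i \neq Y} f_i^{-1}(U_i) \in \Acal$; the paper then says this suffices because $\Bcal_{Y^I}$ is generated by the opens, and the opens are generated by these basic opens. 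Thus the paper treats the finite-support structure of the product topology as the mechanism making the tuple map measurable for $\Bcal_{Y^I}$ itself, for arbitrary index sets $I$, and your proposed fallback to second countability is not taken.

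That said, your caution is not misplaced. The $\sigma$-algebra generated by the basic opens of $Y^I$ is precisely $\Bcal_Y^{\otimes I}$, so the step in the paper's proof of Lemma~\ref{Lemma: Pairing map of measurable functions is meas} that passes from ``preimages of basic opens lie in $\Acal$'' to ``preimages of all Borel sets of $Y^I$ lie in $\Acal$'' conflates topological generation with $\sigma$-algebra generation and is tacitly using the very inclusion $\Bcal_{Y^I}\subseteq\Bcal_Y^{\otimes I}$ you were worried about. The paper simply glosses over this point; you have correctly located where the delicate step lives, even though the paper does not acknowledge it as such.
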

	\begin{corollary}
		Let $R$ be a topological ring. Then for any measurable space $(X,\Acal)$ the set $\Meas(X,R;\Bcal_{R})$ is a ring. In particular, the real-measurable functions $\Meas(X,\R;\Bcal_{\R})$ and complex-measurable functions $\Meas(X,\C;\Bcal_{\C})$ are rings.
	\end{corollary}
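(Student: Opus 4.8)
The plan is straightforward: the statement is an application of the Theorem to the Lawvere theory $\Tcal_{\Ring}$ of rings recorded in Section~\ref{Section: Review Lawvere}. First I would recall that being a topological ring is exactly being a topological model of $\Tcal_{\Ring}$: a topological ring is a topological space $R$ equipped with continuous binary operations $R\times R\to R$ interpreting addition and multiplication, a continuous unary operation $R\to R$ interpreting negation, and two distinguished points $0,1\in R$, all subject to the ring axioms; and since finite products in $\Top$ are the usual products of topological spaces, this is precisely the data of an object of $\Tcal_{\Ring}(\Top)_0$. Hence a topological ring $R$ satisfies $R\in\Tcal_{\Ring}(\Top)_0$.

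Then I would apply the Theorem with $\Tcal=\Tcal_{\Ring}$ and $Y=R$, obtaining $\Meas(X,R;\Bcal_R)\in\Tcal_{\Ring}(\Set)_0$. A set-theoretic model of $\Tcal_{\Ring}$ is by definition a ring, so $\Meas(X,R;\Bcal_R)$ is a ring; unwinding the construction carried out in the proof of the Theorem shows moreover that its ring operations are the pointwise ones inherited from $R$. For the final sentence it then remains only to observe that $\R$ and $\C$ with their Euclidean topologies are topological rings — addition, multiplication, and negation are continuous and the units are points — hence lie in $\Tcal_{\Ring}(\Top)_0$, and to reapply the previous step with $R=\R$ and with $R=\C$.

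I do not anticipate a genuine obstacle: all of the mathematical content lives in the Theorem, and the corollary is essentially bookkeeping. The only point demanding any care is the identification in the first paragraph between the classical notion of ``topological ring'' and the theory-internal notion of ``topological model of $\Tcal_{\Ring}$'', which is immediate from whichever presentation of $\Tcal_{\Ring}$ was fixed in Section~\ref{Section: Review Lawvere} together with the remark that an $n$-ary operation of the theory is interpreted in $\Top$ as a continuous map $R^{n}\to R$. In the spirit advertised in the introduction, the entire argument can honestly be compressed to one sentence: apply the Theorem to $\Tcal_{\Ring}$, noting that $R$ — and in particular $\R$ and $\C$ — is a topological model of it.
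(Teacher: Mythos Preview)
Your proposal is correct and follows exactly the paper's own argument: identify a topological ring with a $\Top$-model of $\Tcal_{\Ring}$, apply the main theorem, and then specialize to $\R$ and $\C$. The paper's proof is slightly terser but otherwise identical in content and structure.
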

	\begin{corollary}
		Let $G$ be a topological group and let $(X,\Acal)$ be any measurable space. Then $\Meas(X,G;\Bcal_{G})$ is a group. In particular, for any local field $F$ the set $\Meas(X,\operatorname{GL}_n(F);\Bcal_{\operatorname{GL}_n(F)})$ is a group.
	\end{corollary}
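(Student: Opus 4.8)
The plan is to obtain this as a direct instance of the Theorem with $\Tcal = \Tcal_{\Grp}$, the (finitary) Lawvere theory of groups, whose generating operations are a binary multiplication, a unary inverse, and a nullary unit, subject to the usual group identities. First I would observe that a model of $\Tcal_{\Grp}$ in $\Top$ is precisely a topological group: it consists of a topological space $G$ together with continuous maps $m\colon G\times G\to G$, $\iota\colon G\to G$ and $e\colon G^0\to G$ satisfying the group axioms, the map $e$ out of the one-point space being automatically continuous, and conversely every topological group is such a model. Hence a topological group $G$ is an object of $\Tcal_{\Grp}(\Top)_0$, so the Theorem gives at once that $\Meas(X,G;\Bcal_G)\in\Tcal_{\Grp}(\Set)_0$; since a set-theoretic model of $\Tcal_{\Grp}$ is exactly a group (with pointwise operations), this is the first assertion.

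For the ``in particular'' clause I would check that $\operatorname{GL}_n(F)$ is a topological group whenever $F$ is a local field. As $F$ is non-discrete and locally compact it is a topological field, so $M_n(F)\cong F^{n^2}$ is a topological ring; the map $\det\colon M_n(F)\to F$ is polynomial, hence continuous, and since $F^\times$ is open in $F$ the set $\operatorname{GL}_n(F)=\det^{-1}(F^\times)$ is an open subspace of $M_n(F)$. Matrix multiplication is given by polynomials in the entries and is therefore continuous, while inversion is continuous on $\operatorname{GL}_n(F)$ by Cramer's rule $A^{-1} = (\det A)^{-1}\operatorname{adj}(A)$, using that $\det$ is continuous and nonvanishing there, that inversion $F^\times \to F^\times$ is continuous, and that the entries of $\operatorname{adj}(A)$ are polynomial in those of $A$. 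Thus $\operatorname{GL}_n(F)$ is a topological group, and the first part of the corollary applies to it.

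Since the Theorem carries essentially all of the weight, I do not expect a genuine obstacle here. The one point deserving care is the verification that $\operatorname{GL}_n(F)$ really is a topological group --- in particular that inversion is continuous --- which relies on $F^\times$ being open in $F$ with continuous multiplicative inverse, a feature one must invoke specifically for $F$ a local field and which would fail for a general topological ring. It is also worth remarking that the Borel $\sigma$-algebra $\Bcal_{\operatorname{GL}_n(F)}$ appearing in the statement is simply the one generated by the subspace topology of $\operatorname{GL}_n(F)\subseteq F^{n^2}$, so the hypotheses of the Theorem are met without further ado.
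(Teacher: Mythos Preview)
Your proposal is correct and follows exactly the paper's approach: apply the main theorem with $\Tcal=\Tcal_{\Grp}$, using that topological groups are precisely the $\Top$-models of $\Tcal_{\Grp}$, and then observe that $\operatorname{GL}_n(F)$ is a topological group when $F$ is a local field. The paper's own proof is terser (it simply asserts the last fact without the Cramer's-rule justification you supply), but the structure is identical.
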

	
	\subsection{Acknowledgments}
	This work was supported by an AARMS postdoctoral fellowship.
	
	\section{A Review of Measure Theory for Topological Spaces}\label{Section: Review Measure}
	
	In this short section we review some of the basics of measure theory regarding measurable spaces and $\sigma$-algebras. Most of this section is simply review of what is explained in more detail in many textbooks in the literature (cf.\@ \cite{conway2012course}, \cite{Doob}, \cite{Tao}, \cite{taylormeasure}, for instance), but we present it primarily because, in my opinion, Example \ref{Example: sigma algebras not topologies} and the interaction between uncountably infinite topological spaces and measure theory are not as well-known as they likely should be.
	
	\begin{definition}\label{Defn: Sigma Alg}
	Let $X$ be a set and let $\Pcal X$ denote the power set
	\[
	\Pcal X := \lbrace S \; | \; S \subseteq X \rbrace
	\]
	of $X$. Then a $\sigma$-algebra on $X$ is a subset $\Acal \subseteq \Pcal X$ for which:
	\begin{itemize}
		\item We have that $X \in \Acal$.
		\item If $A \in \Acal$, then $A^c := X \setminus A \in \Acal$;
		\item If $I$ is a countable index set and if $\lbrace A_i \; | \; i \in I \rbrace \subseteq \Acal$ then
		\[
		\bigcup_{i \in I} A_i \in \Acal.
		\]
	\end{itemize}
	\end{definition}
	\begin{definition}\label{Defn: Measurable Space}
	A measurable space is a pair $(X,\Acal)$ where $X$ is a set and $\Acal$ is a $\sigma$-algebra on $X$.
	\end{definition}
	Of course the definition of a measurable space is geometrically motivated; it allows us to axiomitize which pieces of a set $X$ can have measured volume and gives us inference rules for what we can say about how these sets must behave. The first axiom says that if we can measure the volume of a piece $A \subseteq X$ then we can measure the volume of the complement of $A$, while the second rule says that if we can measure the volume of countably many pieces $A_i \subseteq X$ then we can measure the volume of the compositum $\cup_{i \in I} A_i$. Another important way to describe the geometry of a set $X$ is through a topology on $X$, which we now describe.
	
	\begin{definition}\label{Defn: Topology}
		A topological space is a pair $(X,\Tcal)$ where $X$ is a set and $\Tcal \subseteq \Pcal X$ is a set of subsets we declare to be open such that:
		\begin{itemize}
			\item $\emptyset, X \in \Tcal$;
			\item For any set $I$, if $\lbrace U_i \; | \; i \in I, U_i \in \Tcal \rbrace \subseteq \Tcal$ then
			\[
			\bigcup_{i \in I} U_i \in \Tcal.
			\]
			\item For any finite collection $U_1, \cdots, U_n \in \Tcal$, there is a $U \in \Tcal$ for which
			\[
			\bigcap_{i=1}^{n} U_i^{c} = U^c.
			\]
		\end{itemize}
	\end{definition}
	An important consequence of the definition of a $\sigma$-algebra is that such a set is closed with respect to countable intersections of its members. This allows us to deduce the following proposition which tells us that $\sigma$-algebras are topologies on countable sets.
	\begin{proposition}
	Let $(X,\Acal)$ be a countable measure space. Then $\Acal$ is a topology on $X$.
	\end{proposition}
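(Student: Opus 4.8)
The plan is to verify the three axioms of Definition \ref{Defn: Topology} directly for $\Acal$, using only the closure properties of a $\sigma$-algebra together with the countability of $X$. Note that the first and third axioms will require no hypothesis on the cardinality of $X$ at all; only the axiom on arbitrary unions genuinely uses that $X$ is countable.

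First I would dispatch the easy axioms. Since $X \in \Acal$ and $\Acal$ is closed under complements, we get $\emptyset = X^c \in \Acal$, which together with $X \in \Acal$ gives the first axiom. For the third, given $A_1, \dots, A_n \in \Acal$, set $A_i := \emptyset$ for $i > n$; then $\bigcup_{i=1}^{n} A_i = \bigcup_{i \in \N} A_i \in \Acal$ because a finite union is in particular a countable union and $\Acal$ is closed under countable unions. Taking $U := \bigcup_{i=1}^{n} A_i \in \Acal$ and applying De Morgan's law yields $U^c = \bigcap_{i=1}^{n} A_i^c$, which is exactly what the third axiom of Definition \ref{Defn: Topology} asks for; this is really just the observation already made in the text that a $\sigma$-algebra is closed under finite (indeed countable) intersections.

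The heart of the matter is the second axiom: an \emph{arbitrary} union $\bigcup_{i \in I} A_i$ of members of $\Acal$ must lie in $\Acal$, even though a priori only countable unions are guaranteed by Definition \ref{Defn: Sigma Alg}. Here I would use the countability of $X$ to replace the possibly uncountable index set $I$ by a countable one. Write $U := \bigcup_{i \in I} A_i$, a subset of $X$; since $X$ is countable, so is $U$. For each $x \in U$ choose some $i_x \in I$ with $x \in A_{i_x}$. Then $U = \bigcup_{x \in U} A_{i_x}$: the inclusion $\supseteq$ is immediate, and $\subseteq$ holds because each $x \in U$ lies in $A_{i_x}$. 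The right-hand side is a union of members of $\Acal$ indexed by the countable set $U$, hence lies in $\Acal$ by the third clause of Definition \ref{Defn: Sigma Alg}. Therefore $U \in \Acal$, and all three axioms of Definition \ref{Defn: Topology} are satisfied, so $\Acal$ is a topology on $X$.

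The only subtlety I anticipate is the re-indexing step in the second axiom, which invokes a choice of $i_x$ for each $x \in U$; this is a harmless appeal to (countable) choice and is the standard trick in this kind of argument. Everything else is bookkeeping with De Morgan's laws and the fact that finite unions are countable unions. One should also just confirm that the literal form of the third topology axiom is the one being checked, which reduces to the elementary identity $\left(\bigcup_{i=1}^{n} A_i\right)^c = \bigcap_{i=1}^{n} A_i^c$.
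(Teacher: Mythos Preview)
Your proof is correct. The paper states this proposition without proof, offering only the preceding sentence (that $\sigma$-algebras are closed under countable intersections) as a gesture toward an argument, so there is nothing substantive to compare against; your direct verification of the three axioms of Definition \ref{Defn: Topology} is precisely the check the paper leaves to the reader. The crucial step---reducing an arbitrary union $\bigcup_{i \in I} A_i$ to a countable one by re-indexing over the countable set $U = \bigcup_{i \in I} A_i$ itself---is the standard and correct move here, and your acknowledgment of the (countable) choice involved is appropriate.

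One minor remark: your aside that the third axiom ``is really just the observation \ldots that a $\sigma$-algebra is closed under finite (indeed countable) intersections'' is slightly misaligned with what you actually verified. As literally written in Definition \ref{Defn: Topology}, the condition $\bigcap_{i=1}^n U_i^c = U^c$ with $U \in \Tcal$ is equivalent to closure under finite \emph{unions}, not intersections, and that is exactly what your choice $U = \bigcup_{i=1}^n A_i$ establishes. This does not affect correctness, since $\sigma$-algebras are closed under both finite unions and finite intersections; it is only the explanatory gloss that should be adjusted.
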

	With this in mind it is tempting to think that $\sigma$-algebras give rise to topologies on sets, but sadly it is relatively well-known that this need not be the case. However, as this is not as well-known as it should be, we present the example below to illustrate this fact.
\begin{example}\label{Example: sigma algebras not topologies}
	Let $X$ be an uncountably infinite set and let $\Acal \subseteq \Pcal(X)$ be the $\sigma$-algebra of countable or cocountable subsets of $X$, i.e.,
	\[
	\Acal := \lbrace Y \subseteq X \; : \; \lvert Y \rvert \leq \aleph_0\,\text{or}\,\lvert Y^c \rvert \leq \aleph_0 \rbrace.
	\]
	It is routine and tedious to check that $\Acal$ is a $\sigma$-algebra on $X$. To see that it does not generate a topology on $X$ note that since the set $\lbrace x \rbrace \subseteq X$ for each $x \in X$, $\Acal$ contains every singleton point of $X$ and so must be the discrete topology if it is a topology. However, as $\Acal$ contains no uncountable set with uncountable complement this is impossible and so $\Acal$ cannot be a topology on $X$.
\end{example}
	
\section{A Review of Lawvere Theories}\label{Section: Review Lawvere}
In this section we give a short review of Lawvere theories, their basic notions, and some examples. Lawvere theories are generalizations and formalizations of algberiac theories (as they appear in universal algbera and logic; cf.\@ ) and give us a language to discuss and the infinitary Lawvere theories we allow in this paper even provide a way to discuss algebraic objects that have infinitely many operations. We follow \cite[Appendix A]{brandenburg2021large}, which contains an expository account of Lawvere theories in detail, and formalize the notion of Lawvere theories as certain limit sketches; other equivalent formalizations are given in \cite{TTT}, \cite{HylandPower}, and \cite{AdamekRosicky}, among other sources. Of course, it is an important theorem of Linton (cf.\@ \cite{Linton} for the usual reference and \cite{brandenburg2021large} for a self-contained explicit and direct proof) that there is an equivalence of categories
\[
\Lawvere \simeq \Monad(\Set).
\]
While we will not need more than a basic definition of Lawvere theories in this paper, we present some examples in this short section so that we can firmly ground our intuition in the sort of examples which are likely to be most familiar.

\begin{definition}
An infintary Lawvere theory is a pair $(\Lcal, X)$ where $\Lcal$ is a locally small category with all $\Set$-indexed products and $X$ is a distinguished object of $\Lcal$ such that for all objects $Y$ of $\Lcal$ there exists a set $I$ and an isomorphism
\[
Y \cong \prod_{i \in I} X = X^I
\]
in $\Lcal$.
\end{definition}
\begin{remark}\label{Remark: Lazy man}
Because of the nature of a Lawvere theory $(\Lcal, X)$ described above, to describe functors $F:\Lcal \to \Cscr$ it suffices to define $F$ on the objects $X^I$ for all sets $I$ and on all morphisms between all $X^I$ and $X^J$.
\end{remark}
\begin{definition}
We say that a Lawvere theory $(\Lcal,X)$ is finitary if for any index sets $I, J$ we have that
\[
\Lcal(X^I, X^J) \cong \left(\colim_{\substack{E \subseteq I \\ E\,\text{finite}}} \Lcal(X^E,X)\right)^I.
\]
\end{definition}
\begin{example}\label{Example: Lawvere theory of gorups}
The (finitary) Lawvere theory of groups is the category $\Tcal_{\Grp}$ where we define the obejcts to be given by:
\begin{itemize}
	\item For every index set there is an object $G^I$. The distinguished object is $G := G^{[0]}$ where $[0] = \lbrace 0 \rbrace$. We also write $\top := G^{\emptyset}$.
\end{itemize}
We define the morphisms to be generated by making sure that each $G^I$ is a product indexed by $I$ together with some extra morphisms
\begin{itemize}
	\item $1_G:\top \to G$
	\item $\inv:G \to G$
	\item $\mu:G \times G \to G$
\end{itemize}
which satisfy the diagrams
\[
\begin{tikzcd}
G \ar[r]{}{\cong} \ar[d, swap]{}{\cong} \ar[drr, equals]{}{} & G \times \top \ar[r]{}{\id_G \times 1_G} &  G \times G \ar[d]{}{\mu} \\
\top \times G \ar[r, swap]{}{1_G \times \id_G} & G \times G \ar[r, swap]{}{\mu} & G
\end{tikzcd}
\]
\[
\begin{tikzcd}
(G \times G) \times G \ar[rr]{}{\mu \times \id_G} \ar[d, swap]{}{\cong} & & G \times G \ar[d]{}{\mu} \\
G \times (G \times G) \ar[dr, swap]{}{\id_G\times \mu} & & G \\
&  G \times G \ar[ur, swap]{}{\mu}
\end{tikzcd}
\]
and:
\[
\begin{tikzcd}
G \ar[r]{}{\Delta} \ar[dr, swap, dashed]{}{!_G} & G \times G \ar[rr, shift left = .5ex]{}{\inv \times \id_G} \ar[rr, swap, shift right = .5ex]{}{\id_G\times \inv} & & G \times G \ar[d]{}{\mu} \\
 & \top \ar[rr, swap]{}{1_G} & & G
\end{tikzcd}
\]
\end{example}
\begin{example}\label{Example: Lawvere of associative rings}
	The (finitary) Lawvere theory of (associative unital) rings is the category $\Tcal_{\Ring}$ where we define the obejcts to be given by:
	\begin{itemize}
		\item For every index set there is an object $G^I$. The distinguished object is $R := R^{[0]}$ where $[0] = \lbrace 0 \rbrace$. We also write $\top := R^{\emptyset}$.
	\end{itemize}
	We define the morphisms to be generated by making sure that each $R^I$ is a product indexed by $I$ together with some extra morphisms
	\begin{itemize}
		\item $0_R:\top \to R$
		\item $1_R:\top \to R$
		\item $\negg:R \to R$
		\item $\alpha:R \times R \to R$
		\item $\mu:R \times R \to R$
	\end{itemize}
	which satisfy the diagrams
	\[
	\begin{tikzcd}
	R \ar[r]{}{\cong} \ar[d, swap]{}{\cong} \ar[drr, equals]{}{} & R \times \top \ar[r]{}{\id_R \times 1_R} &  R \times R \ar[d]{}{\mu} \\
	\top \times R \ar[r, swap]{}{1_R \times \id_R} & R \times R \ar[r, swap]{}{\mu} & R
	\end{tikzcd}
	\]
	\[
	\begin{tikzcd}
	(R \times R) \times R \ar[rr]{}{\mu \times \id_R} \ar[d, swap]{}{\cong} & & R \times R \ar[d]{}{\mu} \\
	R \times (R \times R) \ar[dr, swap]{}{\id_R\times \mu} & & R \\
	&  R \times R \ar[ur, swap]{}{\mu}
	\end{tikzcd}
	\]
	\[
	\begin{tikzcd}
	R \ar[r]{}{\cong} \ar[d, swap]{}{\cong} \ar[drr, equals]{}{} & R \times \top \ar[r]{}{\id_R \times 0_R} &  R \times R \ar[d]{}{\alpha} \\
	\top \times R \ar[r, swap]{}{0_R \times \id_R} & R \times R \ar[r, swap]{}{\alpha} & R
	\end{tikzcd}
	\]
	\[
	\begin{tikzcd}
	(R \times R) \times R \ar[rr]{}{\alpha \times \id_R} \ar[d, swap]{}{\cong} & & R \times R \ar[d]{}{\alpha} \\
	R \times (R \times R) \ar[dr, swap]{}{\id_R\times \alpha} & & R \\
	&  R \times R \ar[ur, swap]{}{\alpha}
	\end{tikzcd}
	\]
	\[
	\begin{tikzcd}
	R \ar[r]{}{\Delta} \ar[dr, swap, dashed]{}{!_G} & R \times R \ar[rr, shift left = .5ex]{}{\negg \times \id_R} \ar[rr, swap, shift right = .5ex]{}{\id_R\times \negg} & & R \times R \ar[d]{}{\alpha} \\
	& \top \ar[rr, swap]{}{0_R} & & R
	\end{tikzcd}
	\]
	and diagrams expressing left and right distributivity of multiplication and addition:
	\[
	\begin{tikzcd}
	R \times (R \times R) \ar[d, swap]{}{\Delta} \ar[rrr]{}{\id_R\times \alpha} &  & & R \times R \ar[d]{}{\mu} \\
	(R \times R) \times (R \times R) \ar[d, swap]{}{\cong} & (R \times R) \times (R \times R) \ar[r, swap]{}{\mu \times \mu} & R \times R \ar[r]{}{\alpha} & R \\
	R \times (R \times R) \times R \ar[r, swap]{}{\id_R \times \text{switch} \times \id_R} & R \times (R \times R) \times R \ar[u, swap]{}{\cong} 
	\end{tikzcd}
	\]
	\[
	\begin{tikzcd}
	(R \times R) \times R \ar[d, swap]{}{\Delta} \ar[rrr]{}{\alpha \times \id_R} & & & R \times R \ar[d]{}{\mu} \\
	(R \times R) \times (R \times R) \ar[d, swap]{}{\cong} & (R \times R) \times (R \times R) \ar[r]{}{\mu \times \mu}& R \times R \ar[r, swap]{}{\alpha} & R  \\
	R \times (R \times R) \times R \ar[r, swap]{}{\id_R \times \text{switch} \times \id_R} & R \times (R \times R) \times R \ar[u, swap]{}{\cong}	\end{tikzcd}
	\]
\end{example}
\begin{definition}
Let $\Cscr$ be a category and let $(\Lcal,X)$ be a Lawvere theory. A $\Cscr$-model of $\Lcal$ is a product-preserving functor $M:\Lcal \to \Cscr$. In this case we call $M(X)$ the underlying object of the model. Moreover, the category of all such models is the category
\[
\Lcal(\Cscr) := [\Lcal,\Cscr]_{\text{prod}}
\]
of product-preserving functors $\Lcal \to \Cscr$ and their natural transformations.
\end{definition}
\begin{remark}
If $(\Lcal,X)$ is a Lawvere theory then:
\begin{itemize}
	\item A $\Set$-valued model of $\Lcal$ is often called ``an $\Lcal$'' by abuse of notation (just as we call models of $\Tcal_{\Grp}$ in $\Set$ groups, $\Tcal_{\Ring}$ in $\Set$ rings, etc.). There are  equivalences of categories $\Tcal_{\Grp}(\Set) \simeq \Grp$ and $\Tcal_{\Ring}(\Set) \simeq \Ring$ which follow immediately after unwinding the definitions of everything in sight.
	\item A $\Top$-valued model of $\Lcal$ is a topological model of $\Lcal$. Classical examples of this include topological groups (models of $\Tcal_{\Grp}$) and topological rings (models of $\Tcal_{\Ring}$). However, topological fields are \textbf{not} examples of $\Top$-valued models of a Lawvere theory because the theory of fields is not a Lawvere theory.
	\item If $K$ is a field and if $\Cscr = \Var_{/\Spec K}$ is the category of $K$-varieties then $\Lcal(\Var_{/\Spec K})$ is the category of $K$-variety models of $\Lcal$. When $\Lcal = \Tcal_{\Grp}$, $\Lcal(\Var_{/\Spec K}) = \Tcal_{\Grp}(\Var_{/\Spec K})$ is the category of algebraic groups over $K$ (which is a category whose objects occupy an area of central importance in the Langlands Programme and in algebraic geometry).
\end{itemize}
\end{remark}
	
\section{Putting it All Together: Borel Measures}\label{Section: Fun}

We now discuss some topological measure theory before proceeding to prove our main theorem. This material is relatively well-known, so we will mostly focus on giving a quick description in order to get to show how topological models of Lawvere theories and Borel algebras interact.
\begin{definition}
Let $X$ be a topological space. We define the Borel algebra on $X$, $\Bcal_X$, to be the $\sigma$-algebra generated by the open subsets $U \subseteq X$.
\end{definition}
\begin{definition}
Let $X$ be a topological space. We say that a $\sigma$-algebra $\Acal$ on $X$ is a Borel algebra if for any open set $U \subseteq X$, $U \in \Acal$.
\end{definition}

From the definition it is immediate that if $X$ is a topological space then we can describe the Borel algebra $\Bcal_X$ by
\[
\Bcal_X := \bigcap_{\substack{\Acal\,\text{is a}\, \sigma\text{-algebra} \\ \Acal\,\text{is a Borel algebra}}} \Acal.
\]
This follows more or less immediately from the description of $\Bcal_X$ being generated by the open sets of $X$.

\begin{proposition}\label{Prop: Smallest sigma alg with all opens}
Let $X$ be a topological space. The Borel algebra $\Bcal_X$ on $X$ admits the description
\[
\Bcal_X = \inf\left\lbrace \Acal \; | \; \Acal\,\textnormal{is a}\;\sigma\textnormal{-algebra}, U \in \Acal, U \subseteq X\,\textnormal{open} \right\rbrace
\]
in the power set $\Pscr(X)$ regarded as a Boolean algebra with respect to the subset order $\subseteq$.
\end{proposition}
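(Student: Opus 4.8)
The plan is to reduce the statement to the two standard closure facts for $\sigma$-algebras: that an arbitrary intersection of $\sigma$-algebras on $X$ is again a $\sigma$-algebra on $X$, and that such an intersection contains every open set whenever each of the intersectands does. Write $\mathcal{F}$ for the collection of all Borel algebras on $X$, i.e.
\[
\mathcal{F} := \left\lbrace \Acal \subseteq \Pscr(X) \; | \; \Acal\,\text{is a}\,\sigma\text{-algebra and}\,U \in \Acal\,\text{for every open}\,U \subseteq X \right\rbrace .
\]
First I would observe that $\mathcal{F} \neq \emptyset$, since $\Pscr(X) \in \mathcal{F}$; thus the infimum in question is taken over a nonempty family.

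Next I would verify the two closure facts by checking the three axioms of Definition \ref{Defn: Sigma Alg} pointwise. If $\{\Acal_j\}_{j \in J} \subseteq \mathcal{F}$ and $\Bcal := \bigcap_{j \in J} \Acal_j$, then $X \in \Bcal$ because $X \in \Acal_j$ for each $j$; $\Bcal$ is closed under complements and under countable unions because each $\Acal_j$ is; and every open $U \subseteq X$ lies in $\Bcal$ because $U \in \Acal_j$ for each $j$. Hence $\Bcal \in \mathcal{F}$, so $\mathcal{F}$ is closed under arbitrary intersections and in particular $\bigcap_{\Acal \in \mathcal{F}} \Acal$ is itself a member of $\mathcal{F}$.

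The third step is to interpret the symbol $\inf$: in any powerset lattice ordered by $\subseteq$ the greatest lower bound of a family of subsets is its intersection, so $\inf \mathcal{F} = \bigcap_{\Acal \in \mathcal{F}} \Acal$, and by the previous paragraph this greatest lower bound is actually attained, i.e. it is the least element of $\mathcal{F}$. Finally I would identify this set with $\Bcal_X$: by definition $\Bcal_X$ is the $\sigma$-algebra generated by the open sets of $X$, equivalently the smallest $\sigma$-algebra containing every open set, which is exactly the least element of $\mathcal{F}$ — precisely the identification recorded in the displayed formula immediately preceding the statement. Combining these gives $\Bcal_X = \inf \mathcal{F}$, as claimed.

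I do not expect a genuine obstacle here. The only points needing care are the (entirely routine) verification that the three defining properties of a $\sigma$-algebra survive arbitrary intersection, and making explicit that infima in a powerset lattice are computed as intersections, so that the abstract order-theoretic $\inf$ of the proposition coincides with the concrete $\bigcap$ appearing in the description of $\Bcal_X$ given just above.
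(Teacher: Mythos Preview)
Your proposal is correct and is a careful expansion of exactly the argument the paper has in mind: the paper does not include a formal proof environment for this proposition at all, only the remark preceding it that the intersection description ``follows more or less immediately from the description of $\Bcal_X$ being generated by the open sets of $X$.'' Your verification that arbitrary intersections of $\sigma$-algebras are $\sigma$-algebras and that infima in a powerset lattice are computed by intersection is the routine unpacking of that remark.
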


With this terminology we can now develop the theory of Borel measurable functions in order to derive our main result. These are the measurable functions $f:X \to Y$ where $Y$ is a topological space which interact in a compatible way with the topology on $Y$.
\begin{definition}
Let $Y$ be a topological space and let $(X,\Acal)$ be a measurable space. We then say that $f:X \to Y$ is Borel measurable (or $\Bcal_Y$-measurable if we need to emphasize the space $Y$) if and only if for all $B \in \Bcal_Y$ we have $f^{-1}(B) \in \Acal$.
\end{definition}

This leads to a standard but helpful set-theoretic lemma.
\begin{lemma}\label{Lemma: Set of preimage borels is sigma alg}
If $(X,\Acal)$ is a measurable space and if $f:X \to Y$ is a function of sets then
\[
\Bcal := \left\lbrace B \subseteq Y \; | \; f^{-1}(B) \in \Acal \right\rbrace
\]
is a $\sigma$-algebra on $Y$.
\end{lemma}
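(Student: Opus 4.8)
The plan is to verify the three defining axioms of a $\sigma$-algebra from Definition \ref{Defn: Sigma Alg} directly for the set $\Bcal = \lbrace B \subseteq Y \mid f^{-1}(B) \in \Acal \rbrace$, using only the fact that $\Acal$ itself is a $\sigma$-algebra on $X$ together with the standard commutation properties of preimages with Boolean operations. The key mechanism throughout is that $f^{-1}$ preserves complements and arbitrary unions: $f^{-1}(Y) = X$, $f^{-1}(Y \setminus B) = X \setminus f^{-1}(B)$, and $f^{-1}\left(\bigcup_{i \in I} B_i\right) = \bigcup_{i \in I} f^{-1}(B_i)$. These identities are what translate membership questions about $\Bcal$ into membership questions about $\Acal$.

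First I would check that $Y \in \Bcal$: since $f^{-1}(Y) = X$ and $X \in \Acal$ by the first axiom for $\Acal$, we conclude $Y \in \Bcal$. Second, suppose $B \in \Bcal$, so $f^{-1}(B) \in \Acal$; then $f^{-1}(B^c) = f^{-1}(Y \setminus B) = X \setminus f^{-1}(B) = \left(f^{-1}(B)\right)^c$, which lies in $\Acal$ because $\Acal$ is closed under complements, hence $B^c \in \Bcal$. Third, let $I$ be a countable index set and $\lbrace B_i \mid i \in I \rbrace \subseteq \Bcal$, so each $f^{-1}(B_i) \in \Acal$; then
\[
f^{-1}\left(\bigcup_{i \in I} B_i\right) = \bigcup_{i \in I} f^{-1}(B_i) \in \Acal
\]
since $\Acal$ is closed under countable unions, and therefore $\bigcup_{i \in I} B_i \in \Bcal$. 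This exhausts the axioms.

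I do not anticipate a genuine obstacle here; this is the sort of lemma whose proof is entirely a matter of bookkeeping with the preimage identities. The only point deserving mild care is the second axiom, where one must be attentive that the complement is taken inside the correct ambient set (complements of subsets of $Y$ on the left, complements of subsets of $X$ on the right), but the identity $f^{-1}(Y \setminus B) = X \setminus f^{-1}(B)$ is exactly what reconciles the two. If one wished to be even more economical one could note that $\Bcal$ is precisely the preimage under the Boolean-algebra homomorphism $f^{-1} \colon \Pscr(Y) \to \Pscr(X)$ of the sub-$\sigma$-algebra $\Acal$, and that such preimages of $\sigma$-algebras along countably-complete Boolean homomorphisms are again $\sigma$-algebras — but for a self-contained account the three-line axiom check above is cleanest.
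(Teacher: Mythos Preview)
Your proof is correct and follows essentially the same approach as the paper: both verify the $\sigma$-algebra axioms directly using the commutation of $f^{-1}$ with unions and complements. The only cosmetic differences are that you explicitly record $Y \in \Bcal$ (which the paper omits) while the paper additionally checks closure under countable intersections (which is redundant given the other axioms).
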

\begin{proof}
We proceed with the observations that for any arbitrary family $\lbrace B_i \; | \; i \in I, B_i \subseteq Y \rbrace$ of subsets,
\[
f^{-1}\left(\bigcup_{i \in I} B_i\right) = \lbrace x \in X \; | \; \exists\,i \in I.\,f(x) \in B_i \subseteq Y \rbrace = \bigcup_{i \in I} f^{-1}(B_i)
\]
and dually
\[
f^{-1}\left(\bigcap_{i \in I} B_i\right) = \bigcap_{i \in I}f^{-1}(B_i).
\]
Thus if we have any countable collection of sets $B_n \subseteq Y$ for which $f^{-1}(B_n) \in \Acal$ for all $n \in \N$ we get that
\[
f^{-1}\left(\bigcup_{n \in \N}B_n\right) = \bigcup_{n \in \N} f^{-1}(B_n) \in \Acal
\]
and
\[
f^{-1}\left(\bigcap_{n \in \N}B_n\right) = \bigcap_{n \in \N}f^{-1}\left(B_n\right) \in \Acal
\]
because $\Acal$ is a $\sigma$-algebra and each of $f^{-1}(B_n) \in \Acal$.

To verify that $\Bcal$ is closed under complements, note that $f^{-1}(Y) = X$ and $f^{-1}(S \setminus T) = f^{-1}(S) \setminus f^{-1}(T)$ for any $S, T \subseteq Y$. Then if $B \in \Bcal$ we get that, because $\Acal$ is a $\sigma$-algebra,
\[
f^{-1}(B^c) = f^{-1}(Y \setminus B) = f^{-1}(Y) \setminus f^{-1}(B) = X \setminus f^{-1}(B) = f^{-1}(B)^c \in \Acal.
\]
Thus $f^{-1}(B)^c \in \Bcal$ and so $\Bcal$ is a $\sigma$-algebra.
\end{proof}

We now can establish some basic properties about the Borel measurability of continuous functions and how these functions post-compose with other Borel measurable functions.
\begin{lemma}\label{Lemma: Continuous functions Borel Measurable}
Let $X$ and $Y$ be topological spaces and regard $X$ as a measurable space by equipping it with its Borel measure $\Bcal_{X}$. If $f:X \to Y$ is a continuous function of topological spaces then $f$ is Borel measurable.
\end{lemma}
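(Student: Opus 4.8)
The plan is to show that the preimage of every open set in $Y$ lies in $\Bcal_X$, and then bootstrap this to all Borel sets using Lemma~\ref{Lemma: Set of preimage borels is sigma alg}. Concretely, I would first invoke Lemma~\ref{Lemma: Set of preimage borels is sigma alg} with the measurable space taken to be $(X, \Bcal_X)$: this produces a $\sigma$-algebra
\[
\Bcal := \left\lbrace B \subseteq Y \; | \; f^{-1}(B) \in \Bcal_X \right\rbrace
\]
on $Y$. The goal then reduces to showing $\Bcal_Y \subseteq \Bcal$, since that is exactly the statement that $f^{-1}(B) \in \Bcal_X$ for every $B \in \Bcal_Y$, i.e.\ that $f$ is Borel measurable.

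Next I would use the hypothesis that $f$ is continuous: by definition of continuity, $f^{-1}(U)$ is open in $X$ for every open $U \subseteq Y$, and every open subset of $X$ belongs to $\Bcal_X$ by the very definition of the Borel algebra (or by Proposition~\ref{Prop: Smallest sigma alg with all opens}). Hence every open $U \subseteq Y$ satisfies $f^{-1}(U) \in \Bcal_X$, which says precisely that $U \in \Bcal$. So $\Bcal$ is a $\sigma$-algebra on $Y$ that contains all the open subsets of $Y$; in the terminology of the excerpt, $\Bcal$ is a Borel algebra on $Y$.

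Finally I would appeal to the characterization of $\Bcal_Y$ as the smallest $\sigma$-algebra containing all the open sets of $Y$ (Proposition~\ref{Prop: Smallest sigma alg with all opens}, viz.\ $\Bcal_Y = \inf\lbrace \Acal \mid \Acal\,\text{a}\;\sigma\text{-algebra},\, U \in \Acal \text{ for all open } U \subseteq Y\rbrace$). Since $\Bcal$ is one of the $\sigma$-algebras in that collection, we get $\Bcal_Y \subseteq \Bcal$, completing the argument.

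This proof is essentially routine — there is no real obstacle, and the only thing to be careful about is bookkeeping: making sure the measurable space in Lemma~\ref{Lemma: Set of preimage borels is sigma alg} is instantiated as $(X,\Bcal_X)$ rather than an abstract $(X,\Acal)$, and explicitly noting that ``generated by the open sets'' unpacks (via Proposition~\ref{Prop: Smallest sigma alg with all opens}) to the minimality property that lets us conclude $\Bcal_Y \subseteq \Bcal$. No $\epsilon$'s, no transfinite induction on the generation of $\Bcal_Y$ — the whole point of having Lemma~\ref{Lemma: Set of preimage borels is sigma alg} and Proposition~\ref{Prop: Smallest sigma alg with all opens} available is to sidestep any such construction.
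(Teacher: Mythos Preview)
Your proposal is correct and follows essentially the same route as the paper's own proof: form the $\sigma$-algebra $\lbrace B \subseteq Y \mid f^{-1}(B) \in \Bcal_X\rbrace$ via Lemma~\ref{Lemma: Set of preimage borels is sigma alg}, observe that continuity forces it to contain all opens of $Y$, and then invoke Proposition~\ref{Prop: Smallest sigma alg with all opens} to conclude $\Bcal_Y$ sits inside it. The only cosmetic difference is the order in which the steps are presented and the name given to the auxiliary $\sigma$-algebra.
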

\begin{proof}
Since $f$ is continuous, for every open set $V \subseteq Y$ we have that $f^{-1}(V) \subseteq X$ is open and so $f^{-1}(V) \in \Bcal_X$ as well. However, this then implies that every open $V \subseteq Y$ open we get that 
\[
f^{-1}(V) \in \lbrace U \subseteq Y \; | \; f^{-1}(U) \in \Bcal_X \rbrace =: \Ccal.
\] 
Because $\Ccal$ is a $\sigma$-algebra by Lemma \ref{Lemma: Set of preimage borels is sigma alg} which contains every open set of $Y$, we see that $\Bcal_Y \subseteq \Ccal$ by Proposition \ref{Prop: Smallest sigma alg with all opens}. However, this implies that for any $A \in \Bcal_X$ we have $f^{-1}(A) \in \Bcal_X$ and so $f$ is Borel measurable.
\end{proof}
\begin{proposition}\label{Prop: Composite of borel meas functions}
Let $(X,\Acal)$ be a measurable space and let $Y$ and $Z$ be topological spaces equipped with their corresponding Borel $\sigma$-algebras $\Bcal_Y$ and $\Bcal_Z$. Then if $f:X \to Y$ is $\Bcal_Y$-measurable and if $g:Y \to Z$ is $\Bcal_Z$-measurable, the composite $g \circ f:X \to Z$ is $\Bcal_Z$-measurable.
\end{proposition}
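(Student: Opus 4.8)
The plan is to reduce the claim to the two preceding results about preimages. First I would observe that since $g : Y \to Z$ is $\Bcal_Z$-measurable, for every $B \in \Bcal_Z$ we have $g^{-1}(B) \in \Bcal_Y$; this is exactly the definition of $g$ being Borel measurable, using that $Y$ carries the $\sigma$-algebra $\Bcal_Y$. Then, because $f : X \to Y$ is $\Bcal_Y$-measurable, applying the definition again to the set $g^{-1}(B) \in \Bcal_Y$ yields $f^{-1}\bigl(g^{-1}(B)\bigr) \in \Acal$.

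The second step is the purely set-theoretic identity
\[
(g \circ f)^{-1}(B) = f^{-1}\bigl(g^{-1}(B)\bigr),
\]
valid for any subset $B \subseteq Z$. Combining this with the previous paragraph shows that $(g \circ f)^{-1}(B) \in \Acal$ for every $B \in \Bcal_Z$, which is precisely the statement that $g \circ f$ is $\Bcal_Z$-measurable.

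I do not expect any real obstacle here: the proposition is essentially the observation that the assignment sending a space to its Borel algebra, together with Borel measurable maps, behaves functorially with respect to composition, and the proof is a one-line chase through the definitions plus the contravariant-functoriality of preimage. If one wanted to phrase it without invoking the preimage identity directly, an alternative would be to note that $g^{-1}$ carries $\Bcal_Z$ into $\Bcal_Y$ (an instance of the $\sigma$-algebra pullback already used implicitly in Lemma \ref{Lemma: Set of preimage borels is sigma alg}) and then feed the resulting elements of $\Bcal_Y$ into the measurability hypothesis for $f$; but the straight computation above is shortest. The only thing to be slightly careful about is keeping the three $\sigma$-algebras $\Acal$, $\Bcal_Y$, $\Bcal_Z$ in their correct roles, since the source $X$ carries the abstract $\sigma$-algebra $\Acal$ while both $Y$ and $Z$ carry Borel algebras.
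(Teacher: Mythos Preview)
Your proof is correct and follows essentially the same approach as the paper: take an arbitrary $B \in \Bcal_Z$, use the set-theoretic identity $(g\circ f)^{-1}(B) = f^{-1}(g^{-1}(B))$, and apply the measurability hypotheses on $g$ and then $f$ in sequence to land in $\Acal$. The paper's proof is the same one-line definition chase, with no additional ingredients.
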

\begin{proof}
Let $V \subseteq Z$ with $Z \in \Bcal_Z$; it now suffices to show that
\[
(g \circ f)^{-1}(Z) = f^{-1}(g^{-1}(V))
\]
is contained in $\Acal$. However, as $g$ is $\Bcal_Z$-measurable, $g^{-1}(V) \in \Bcal_Y$. Similarly, as $f$ is $\Bcal_Y$-measurable, $f^{-1}(g^{-1}(V)) \in \Acal$. Thus
\[
f^{-1}\left(g^{-1}(V)\right) = (g \circ f)^{-1}(V) \in \Acal
\]
and so $g \circ f$ is $\Bcal_Z$-measurable.
\end{proof}
\begin{corollary}\label{Cor: Composite of mesurable function and continuous function}
Let $(X,\Acal)$ be a measurable space and let $Y$ and $Z$ be topological spaces equipped with their corresponding Borel algebras $\Bcal_Y$ and $\Bcal_Z$. Then  if $g:Y \to Z$ is a continuous function the composite function $g \circ f:X \to Z$ is $\Bcal_Z$-measurable.
\end{corollary}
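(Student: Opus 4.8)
The plan is to obtain this statement by chaining together the two results immediately preceding it, so that no new measure-theoretic argument is needed. The hypothesis should be read as: $f : X \to Y$ is a $\Bcal_Y$-measurable function out of the measurable space $(X,\Acal)$, and $g : Y \to Z$ is a continuous map of topological spaces, both $Y$ and $Z$ carrying their Borel algebras. The key observation is that the Borel algebra $\Bcal_Y$ plays a double role: it is simultaneously the target $\sigma$-algebra witnessing the measurability of $f$ and the canonical $\sigma$-algebra that makes $Y$ into a measurable space so that Lemma \ref{Lemma: Continuous functions Borel Measurable} applies to $g$.

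First I would regard $Y$ as a measurable space by equipping it with its Borel algebra $\Bcal_Y$, which is precisely the setup of Lemma \ref{Lemma: Continuous functions Borel Measurable}. Since $g : Y \to Z$ is continuous, that lemma yields at once that $g$ is $\Bcal_Z$-measurable as a map from $(Y,\Bcal_Y)$ to $(Z,\Bcal_Z)$.

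Next I would feed this $g$, together with the given $\Bcal_Y$-measurable function $f : X \to Y$, into Proposition \ref{Prop: Composite of borel meas functions}. Its conclusion is exactly that the composite $g \circ f : X \to Z$ is $\Bcal_Z$-measurable, which is the assertion to be proved.

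I do not anticipate any genuine obstacle here; the proof is a two-step invocation of earlier results. The only point meriting a moment's care is the bookkeeping of $\sigma$-algebras: one must observe that the $\sigma$-algebra on $Y$ used in the phrase ``$f$ is $\Bcal_Y$-measurable'' is the same Borel algebra on $Y$ used when applying the continuity lemma to $g$, so that Proposition \ref{Prop: Composite of borel meas functions} genuinely applies. Under the standing convention that a topological space occurring as the codomain of a measurable function is given its Borel algebra, this match is automatic, and the corollary follows.
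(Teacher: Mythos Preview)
Your proposal is correct and matches the paper's proof essentially verbatim: the paper simply says to apply Lemma~\ref{Lemma: Continuous functions Borel Measurable} to see that $g$ is Borel measurable and then invoke Proposition~\ref{Prop: Composite of borel meas functions}. Your additional remark about the $\sigma$-algebra bookkeeping on $Y$ is a reasonable clarification but not strictly needed.
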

\begin{proof}
Apply Lemma \ref{Lemma: Continuous functions Borel Measurable} to derive that $g$ is Borel-measurable and then use Proposition \ref{Prop: Composite of borel meas functions}.
\end{proof}

We now have two last technical ingredients to provide. These show the Borel measurabliity of situations we'll find ourselves in when studying the measurability of topological models of a Lawvere theory $\Lcal$.
\begin{lemma}\label{Lemma: Pairing map of measurable functions is meas}
Let $(X,\Acal)$ be a measurable space, let $I$ be an index set, and let
\[
\left\lbrace f_i:X \to Y \; | \; i \in I \right\rbrace
\]
be a collection of functions which are all $\Bcal_Y$-measurable. If $F:X \to Z^I$ is the map given by
\[
f(x) := \big(f_i(x)\big)_{i \in I}
\]
then $f$ is $\Bcal_{Y^I}$-measurable.
\end{lemma}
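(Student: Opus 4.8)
The plan is to reuse the ``pull a $\sigma$-algebra back along a map'' device already exploited in Lemmas~\ref{Lemma: Set of preimage borels is sigma alg} and~\ref{Lemma: Continuous functions Borel Measurable}, this time applied to the map $F\colon X \to Y^I$ sending $x$ to $\big(f_i(x)\big)_{i \in I}$. Put
\[
\Ccal := \left\lbrace B \subseteq Y^I \; | \; F^{-1}(B) \in \Acal \right\rbrace .
\]
By Lemma~\ref{Lemma: Set of preimage borels is sigma alg} applied to $F$, this $\Ccal$ is a $\sigma$-algebra on $Y^I$, and by Proposition~\ref{Prop: Smallest sigma alg with all opens} the assertion that $F$ is $\Bcal_{Y^I}$-measurable is precisely the inclusion $\Bcal_{Y^I} \subseteq \Ccal$. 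Since $\Ccal$ is a $\sigma$-algebra, this inclusion follows as soon as every open subset of $Y^I$ is shown to lie in $\Ccal$.

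The formal part of the argument is the treatment of the subbasis of the product topology. Writing $\pi_j\colon Y^I \to Y$ for the $j$-th projection, the defining property of $F$ gives $\pi_j \circ F = f_j$, so for each $j \in I$ and each open $V \subseteq Y$ one has
\[
F^{-1}\!\big(\pi_j^{-1}(V)\big) = (\pi_j \circ F)^{-1}(V) = f_j^{-1}(V) \in \Acal ,
\]
the membership holding because $V$ is open, hence $V \in \Bcal_Y$, and $f_j$ is $\Bcal_Y$-measurable. Thus every subbasic open $\pi_j^{-1}(V)$ lies in $\Ccal$, and since $\Ccal$ is closed under finite intersections every basic open $\pi_{j_1}^{-1}(V_1) \cap \cdots \cap \pi_{j_n}^{-1}(V_n)$ lies in $\Ccal$ as well.

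The step I expect to be the real obstacle is the passage from basic opens to \emph{all} opens: an arbitrary open $U \subseteq Y^I$ is a union of basic opens, but a $\sigma$-algebra is closed only under \emph{countable} unions, and for an uncountable index set $I$ this union need not reduce to a countable one. I would therefore route the remainder through the identification of $\Bcal_{Y^I}$ with the $\sigma$-algebra generated by the cylinders $\pi_j^{-1}(B)$ with $B \in \Bcal_Y$ (each of which lies in $\Ccal$ by the displayed computation together with Lemma~\ref{Lemma: Set of preimage borels is sigma alg} applied to $\pi_j$). When $I$ is finite and $Y$ is second countable --- which is the setting of the corollaries, where $Y$ is $\R$, $\C$, or $\operatorname{GL}_n(F)$ for a local field $F$ and the theory $\Tcal$ is finitary --- the space $Y^I$ is itself second countable, every open set is a countable union of basic opens, and the identification is the classical one; this suffices for all applications in the paper. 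For a genuinely infinitary $\Tcal$ the clean statement instead uses the product $\sigma$-algebra $\bigotimes_{i \in I}\Bcal_Y$ in place of $\Bcal_{Y^I}$, or adds a countability hypothesis on $Y$ under which $\Bcal_{Y^I} = \bigotimes_{i \in I}\Bcal_Y$ holds; this comparison between the Borel $\sigma$-algebra of the product and the product $\sigma$-algebra is the one genuinely non-formal ingredient, and I would isolate it as its own lemma rather than leave it buried inside this proof.
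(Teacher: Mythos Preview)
Your approach is essentially the paper's: both arguments reduce to showing that $F^{-1}$ of a basic product-open lies in $\Acal$, and both do this via the identity $F^{-1}\big(\prod_i U_i\big) = \bigcap_i f_i^{-1}(U_i)$ together with the observation that only finitely many factors are proper. The only cosmetic difference is that you phrase the reduction through the pullback $\sigma$-algebra $\Ccal$ and subbasic cylinders $\pi_j^{-1}(V)$, while the paper works directly with basic opens; these are the same computation.

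Where you diverge from the paper is in your treatment of the passage from basic opens to arbitrary opens, and here you are \emph{more} careful than the paper, not less. The paper simply asserts that ``because \ldots\ the open sets of $Y^I$ are generated by those in $\Bscr$ it suffices to prove that for any basic open $U$, $f^{-1}(U) \in \Acal$,'' and moves on. But as you correctly observe, an arbitrary open in $Y^I$ is an \emph{arbitrary} union of basic opens, and a $\sigma$-algebra is closed only under countable unions; so containing the basic opens does not automatically force $\Ccal$ to contain all opens, hence does not force $\Bcal_{Y^I} \subseteq \Ccal$. The paper's proof glosses over exactly the gap you isolate. Your proposed remedies --- restricting to second-countable $Y$ and finite $I$ (which covers all the stated corollaries, since $\Tcal_{\Ring}$ and $\Tcal_{\Grp}$ are finitary and $\R$, $\C$, $\operatorname{GL}_n(F)$ are second countable), or replacing $\Bcal_{Y^I}$ by the product $\sigma$-algebra $\bigotimes_{i \in I} \Bcal_Y$ --- are the standard fixes, and isolating the comparison $\Bcal_{Y^I}$ versus $\bigotimes_{i}\Bcal_Y$ as a separate lemma is the right call.
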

\begin{proof}
Because $Y^I$ is the categorical product of $Y$ taken $I$-many times and carries the product topology, $Y^I$ has basis of open sets
\[
\Bscr := \left\lbrace \prod_{i \in I} U_i \; : \; U_i \subseteq Y\,\text{open}, U_i \ne Y\,\text{for all but finitely many}\,i \right\rbrace.
\]
Similarly, because the Borel $\sigma$-algebra $\Bcal_{Y^I}$ is generated by the opens of $Y^I$ and the open sets of $Y^I$ are generated by those in $\Bscr$ it suffices to prove that for any basic open $U = \prod_{i \in I} U_i$ in $\Bscr$, $f^{-1}(V) \in \Acal$. However, we now note that
\begin{align*}
	f^{-1}\left(\prod_{i \in I} U_i\right) &= \left\lbrace x \in X \; : \; (f_i(x))_{i \in I} \in \prod_{i  \in I} U_i \right\rbrace = \left\lbrace x \in X \; | \; \forall\,i\in I.\,f_i(x)\in U_i\right\rbrace \\
	&= \bigcap_{i \in I} f_i^{-1}(U_i).
\end{align*}
Because $U$ is a basic open in $Y^I$, at most finitely many of the $U_i$ satisfy $U_i \ne Y$. But then because $f_i^{-1}(Y) = X$, the intersection above is a finite intersection
\[
f^{-1}\left(\prod_{i \in I} U_i\right) = \bigcap_{U_i \ne Y} f_i^{-1}(U_i);
\]
moreover since each $U_i$ is open and each $f_i$ is $\Bcal_Y$-measurable, $f_i^{-1}(U_i) \in \Acal$. But then, as finite intersections are countable, it follows that
\[
f^{-1}\left(\prod_{i \in I} U_i\right) = \bigcap_{U_i \ne Y} f_i^{-1}(U_i) \in \Acal
\]
and so $f$ is Borel measurable.
\end{proof}
\begin{remark}
The lemma above says in categorical terms that if we have a collection of $\Bcal_Y$-measurable morphisms
\[
\left\lbrace f_i:X \to Y \; | \; i \in I \right\rbrace
\]
then the pairing map
\[
\langle f_i \rangle_{i \in I}:X \to Y^I
\]
is $\Bcal_{Y^I}$-measurable.
\end{remark}
\begin{proposition}\label{Prop: Paring and composition}
Let $(X,\Acal)$ be a measurable space, let $Y$ be a topological space, let $I$ be an index set. If
\[
\lbrace f_i:X \to Y \; | \; i \in I \rbrace
\]
is a collection of $\Bcal_Y$-indexed functions and $g:Y^I \to Y$ is a $\Bcal_Y$-measurable function, then the function $h:X \to Y$ given by $h = g \circ \langle f_i \rangle_{i \in I}$ is $\Bcal_Y$-measurable.
\end{proposition}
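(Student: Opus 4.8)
The plan is to recognize $h$ as a composite of two maps whose Borel measurability we have already established, and then to invoke the composition result for Borel measurable functions. Concretely, I would write $h = g \circ \langle f_i \rangle_{i \in I}$, where $\langle f_i \rangle_{i \in I} \colon X \to Y^I$ denotes the pairing map $x \mapsto (f_i(x))_{i \in I}$ and $Y^I$ is equipped with the product topology and its associated Borel $\sigma$-algebra $\Bcal_{Y^I}$.

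First I would apply Lemma \ref{Lemma: Pairing map of measurable functions is meas}, in the case $Z = Y$ recorded in the remark following it, to the family $\lbrace f_i \colon X \to Y \mid i \in I \rbrace$ of $\Bcal_Y$-measurable functions. This gives at once that the pairing map $\langle f_i \rangle_{i \in I} \colon X \to Y^I$ is $\Bcal_{Y^I}$-measurable.

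Next I would regard $Y^I$ as a topological space in its own right and apply Proposition \ref{Prop: Composite of borel meas functions} with the measurable space $(X,\Acal)$, with $Y^I$ playing the role of the intermediate topological space, and with $Y$ playing the role of the target topological space. Since $\langle f_i \rangle_{i \in I} \colon X \to Y^I$ is $\Bcal_{Y^I}$-measurable and $g \colon Y^I \to Y$ is $\Bcal_Y$-measurable by hypothesis, that proposition yields that $h = g \circ \langle f_i \rangle_{i \in I} \colon X \to Y$ is $\Bcal_Y$-measurable, which is exactly the assertion.

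I do not expect a genuine obstacle here beyond bookkeeping; the only point requiring care is consistency of structures, namely that the product topology relative to which $\langle f_i \rangle_{i \in I}$ is shown $\Bcal_{Y^I}$-measurable in Lemma \ref{Lemma: Pairing map of measurable functions is meas} is the same topology on $Y^I$ relative to which the hypothesis ``$g$ is $\Bcal_Y$-measurable'' is understood. Once $Y^I$ is fixed throughout as carrying the product topology together with its Borel $\sigma$-algebra, the two cited results compose without friction and the proof is finished.
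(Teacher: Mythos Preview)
Your proposal is correct and follows exactly the same approach as the paper: apply Lemma~\ref{Lemma: Pairing map of measurable functions is meas} to obtain that the pairing $\langle f_i \rangle_{i \in I}$ is $\Bcal_{Y^I}$-measurable, then invoke Proposition~\ref{Prop: Composite of borel meas functions} to conclude that $h = g \circ \langle f_i \rangle_{i \in I}$ is $\Bcal_Y$-measurable. The paper's proof is a one-line citation of these two results, so your version is simply a more explicit unwinding of the same argument.
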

\begin{proof}
Simply apply Lemma \ref{Lemma: Pairing map of measurable functions is meas} and Proposition \ref{Prop: Composite of borel meas functions} to the composition $h = g \circ \langle f_i \rangle_{i \in I}$.
\end{proof}

Combining these tools together allows us to prove our main theorem: the collection of Borel-measurable functions from a measurable space $(X,\Acal)$ into a topological model of a Lawvere theory $\Lcal$ gives rise to a set-theoretic model of $\Lcal$.

	\begin{Theorem}\label{Thm: Main thm}
	Let $\Tcal$ be a Lawvere theory and let $Y$ be a topological model of $\Tcal$, i.e., $Y \in \Tcal(\Top)_0$. Then if $(X, \Acal)$ is a measurable space, the set
	\[
	\Meas(X,Y;\Bcal_X) = \lbrace f:X \to Y \; | \; f\,\text{is}\,\Bcal_Y\textnormal{-measurable}\rbrace
	\]
	is an object in $\Tcal(\Set)_0$.
	\end{Theorem}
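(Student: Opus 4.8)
The plan is to build, from the given topological model of $\Tcal$, a set-valued model with the correct underlying set. Write $A$ for the distinguished object of $\Tcal$, so that every object of $\Tcal$ has the form $A^I = \prod_{i\in I}A$; recall from Remark \ref{Remark: Lazy man} that a functor out of $\Tcal$ is determined by its values on these objects $A^I$ and on the morphisms between them. Abusing notation in the customary way, write $Y$ also for the topological space obtained by evaluating the model $Y$ at $A$; since a topological model preserves products, $Y$ sends $A^I$ to the product space $Y^I=\prod_{i\in I}Y$ and sends the $i$-th projection $p_i\colon A^I\to A$ to the continuous projection $\pi_i\colon Y^I\to Y$. I would then define $N\colon\Tcal\to\Set$ by
\[
N(A^I) := \Meas\bigl(X,Y^I;\Bcal_{Y^I}\bigr), \qquad N(\varphi)(f) := Y(\varphi)\circ f
\]
for a morphism $\varphi$ of $\Tcal$ and $f\in N(A^I)$, and the goal is to show that $N$ is a product-preserving functor with $N(A)=\Meas(X,Y;\Bcal_Y)$, which is exactly the assertion that $\Meas(X,Y;\Bcal_Y)\in\Tcal(\Set)_0$.

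Next I would check that $N$ is a well-defined functor. The only thing to verify is that $N(\varphi)$ lands in the claimed set: $Y(\varphi)$ is a continuous map between products of copies of $Y$, so $Y(\varphi)\circ f$ is again Borel measurable by Corollary \ref{Cor: Composite of mesurable function and continuous function}. The functor axioms $N(\id)=\id$ and $N(\psi\varphi)=N(\psi)N(\varphi)$ then follow immediately from functoriality of the model $Y$ together with associativity of composition. Equivalently, $N$ is the composite of $Y\colon\Tcal\to\Top$ with the post-composition assignment $\Meas(X,-)\colon\Top\to\Set$, which is a functor precisely by Corollary \ref{Cor: Composite of mesurable function and continuous function}.

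The substantive step is that $N$ preserves products. Since every object of $\Tcal$ is a product $A^I=\prod_{i\in I}A$ and the model $Y$ already carries this to the product $Y^I$ in $\Top$, it suffices to show that applying $\Meas(X,-)$ to the product cone $(\pi_i\colon Y^I\to Y)_{i\in I}$ yields a product cone in $\Set$; that is, that the canonical comparison map
\[
\Phi\colon \Meas\bigl(X,Y^I;\Bcal_{Y^I}\bigr)\longrightarrow \prod_{i\in I}\Meas\bigl(X,Y;\Bcal_Y\bigr), \qquad \Phi(f)=(\pi_i\circ f)_{i\in I},
\]
is a bijection. It is well defined by Corollary \ref{Cor: Composite of mesurable function and continuous function} (each $\pi_i$ is continuous); it is injective because $Y^I$ is simultaneously the product of the underlying sets, so a function into it is determined by its components; and it is surjective by Lemma \ref{Lemma: Pairing map of measurable functions is meas}, since for $\Bcal_Y$-measurable maps $f_i\colon X\to Y$ the pairing map $\langle f_i\rangle_{i\in I}\colon X\to Y^I$ is $\Bcal_{Y^I}$-measurable and satisfies $\pi_i\circ\langle f_i\rangle_{i\in I}=f_i$. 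Hence $\Phi$ is bijective, $N$ carries each $A^I$ to a product of copies of $N(A)$, and $N$ is a model of $\Tcal$ in $\Set$. As $N(A)=\Meas(X,Y;\Bcal_Y)$ by construction, this yields the theorem.

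I do not expect a genuine obstacle beyond the lemmas already established: the analytic content — arbitrary (possibly uncountable) index sets, the basis of the product topology, and the collapse of an $I$-indexed intersection to a finite one — is precisely what Lemma \ref{Lemma: Pairing map of measurable functions is meas} handles, together with the composition facts (Proposition \ref{Prop: Composite of borel meas functions} and Corollary \ref{Cor: Composite of mesurable function and continuous function}), so what remains is the categorical bookkeeping of assembling a product-preserving functor. The one point deserving care is to establish product preservation via the \emph{canonical} comparison map $\Phi$ built from the model's own projections, rather than via some abstract set bijection, so that the argument genuinely exhibits the required product cone.
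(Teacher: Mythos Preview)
Your proof is correct and follows essentially the same approach as the paper: define the functor by $A^I\mapsto\Meas(X,Y^I;\Bcal_{Y^I})$ and post-composition with $Y(\varphi)$, then verify product preservation using Lemma \ref{Lemma: Pairing map of measurable functions is meas}. The only cosmetic differences are that you invoke Corollary \ref{Cor: Composite of mesurable function and continuous function} for well-definedness of the morphism action (where the paper cites Proposition \ref{Prop: Paring and composition}) and that you establish product preservation by showing the canonical comparison map is a bijection, whereas the paper verifies the universal property directly against an arbitrary cone; these are equivalent formulations of the same argument.
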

	\begin{proof}
	Because $Y$ is a topological model of $\Tcal$, we write
	\[
	\underline{Y}:\Tcal \to \Top
	\]
	for the corresponding functor and specify that $\underline{Y}(D) = Y$ for $D$ the distinguished object of $\Tcal$. To show that $\Meas(X,Y;\Bcal_Y)$ is a model of $\Tcal$ in $\Set$, we define
	\[
	\underline{\Meas(X,Y;\Bcal_Y)}:\Tcal \to \Set
	\]
	as follows:
	\begin{itemize}
		\item For objects $D^I$ for index sets $I$, we define
		\[
		\underline{\Meas(X,Y;\Bcal_Y)}(D^I) := \Meas(X,Y^{I};\Bcal_{Y^I}).
		\]
		\item For morphisms $\phi:D^I \to D^J$ in $\Tcal$, we define
		\[
		\underline{\Meas(X,Y;\Bcal_Y)}(D^I) \to \underline{\Meas(X,Y;\Bcal_{Y})}(D^J)
		\] 
		to be the function
		\[
		\Meas(X,Y^I;\Bcal_{Y^I}) \to \Meas(X,Y^{J};\Bcal_{Y^J})
		\]
		given by $f \mapsto \underline{Y}(\varphi) \circ f$.
	\end{itemize}
	That this is a functor is a straightforward check from definition and Remark \ref{Remark: Lazy man}, while the fact that the assignment on morphisms is well-defined follows from Proposition \ref{Prop: Paring and composition}. To see that it is product preserving, let $I$ be a set and assume that for all $i \in I$ there is a map $f_i:Z \to \Meas(X,Y;\Bcal_Y)$. Consider the functions $p_i:\Meas(X,Y^{I};\Bcal_{Y^I}) \to \Meas(X,Y;\Bcal_Y)$ given by post-composition by the projection maps $\pi_i:Y^I \to Y$; these are the maps we must show play the role of projections. We define a map $f:Z \to \Meas(X,Y^{I};\Bcal_{Y^I})$ by, for each $z \in Z$, letting $f(z)$ be the function $X \to Y^I$ given by the pairing $\langle f_i(z) \rangle_{i \in I}$. It then follows by construction that for all $i \in I$
	\[
	p_i \circ f = \pi_i \circ \langle f_i \rangle_{i \in I} = f_i
	\]
	so the diagram
	\[
	\begin{tikzcd}
	Z \ar[d, swap]{}{f} \ar[dr]{}{f_i} \\
	\Meas(X,Y^I;\Bcal_{Y^I}) \ar[r, swap]{}{p_i} & \Meas(X,Y;\Bcal_{Y})
	\end{tikzcd}
	\]
	commutes. The fact that $f$ is unique follows from the fact that each pairing map $\langle f_i(z) \rangle_{i \in I}$ is induced by the universal property of the (Cartesian) product. Thus $\Meas(X,Y^I;\Bcal_{Y^I}) \cong \Meas(X,Y;\Bcal_{Y})^I$ and so the functor $\underline{\Meas(X,Y;\Bcal_{Y})}:\Tcal \to \Set$ is product-preserving. Consequently $\Meas(X,Y;\Bcal_{Y})$ is a set-theoretic model of the Lawvere theory $\Tcal$.
	\end{proof}
	
	The usefulness of this theorem is exemplified by following straightforward algebraic proofs of the corollaries below (which are in particular of use in functional analysis, representation theory, harmonic analysis, and number theory).
	\begin{corollary}\label{Cor: Top ring is ring in set}
	Let $R$ be a topological ring. Then for any measurable space $(X,\Acal)$ the set $\Meas(X,R;\Bcal_{R})$ is a ring. In particular, the real-measurable functions $\Meas(X,\R;\Bcal_{\R})$ and complex-measurable functions $\Meas(X,\C;\Bcal_{\C})$ are rings.
	\end{corollary}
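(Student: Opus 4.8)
\emph{Proof sketch.} The plan is to apply Theorem \ref{Thm: Main thm} to the finitary Lawvere theory $\Tcal_{\Ring}$ of associative unital rings from Example \ref{Example: Lawvere of associative rings}. The first step is to observe that a topological ring is the same thing as a $\Top$-valued model of $\Tcal_{\Ring}$, i.e., an object $R \in \Tcal_{\Ring}(\Top)_0$: unwinding the definition of a $\Cscr$-model, such a functor $\underline{R}:\Tcal_{\Ring} \to \Top$ is product-preserving (so $\underline{R}(R^I) = R^I$ with the product topology, by Remark \ref{Remark: Lazy man}) and sends the generating morphisms $0_R, 1_R, \negg, \alpha, \mu$ to continuous maps satisfying exactly the commuting diagrams of Example \ref{Example: Lawvere of associative rings}, which are precisely the ring axioms. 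Continuity of these structure maps is what makes the assignment land in $\Top$ rather than merely in $\Set$, and this is built into the hypothesis that $R$ is a \emph{topological} ring.

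Next I would invoke Theorem \ref{Thm: Main thm} with $\Tcal = \Tcal_{\Ring}$ and $Y = R$: since $(X,\Acal)$ is a measurable space, the set $\Meas(X,R;\Bcal_{R})$ is an object of $\Tcal_{\Ring}(\Set)_0$. Finally I would appeal to the equivalence of categories $\Tcal_{\Ring}(\Set) \simeq \Ring$ recorded earlier to conclude that $\Meas(X,R;\Bcal_{R})$ is a ring; concretely, the operations are pointwise addition, negation, and multiplication of measurable functions, and the content of Theorem \ref{Thm: Main thm} (via Lemma \ref{Lemma: Pairing map of measurable functions is meas} and Proposition \ref{Prop: Paring and composition}) is exactly that these pointwise operations preserve Borel measurability. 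For the last sentence I would simply note that $\R$ and $\C$, with their usual Euclidean topologies, are topological rings — addition, negation, and multiplication are continuous — so the general statement applies verbatim with $R = \R$ and $R = \C$.

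The only obstacle is bookkeeping rather than mathematics: one must match the classical axioms for a topological ring against the diagrams defining $\Tcal_{\Ring}$ and recall that product-preservation pins down $\underline{R}$ on all objects $R^I$ once it is known on $R$ and on the generating morphisms. Given Remark \ref{Remark: Lazy man} and the examples of Section \ref{Section: Review Lawvere}, this identification is immediate, so the corollary is genuinely a one-line consequence of Theorem \ref{Thm: Main thm}.
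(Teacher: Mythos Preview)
Your proposal is correct and follows exactly the paper's approach: identify a topological ring with a $\Top$-model of $\Tcal_{\Ring}$, apply Theorem \ref{Thm: Main thm}, and then specialize to $\R$ and $\C$. The paper's proof is simply a terser version of what you wrote, omitting the bookkeeping about unwinding the equivalence $\Tcal_{\Ring}(\Set)\simeq\Ring$ and the explicit mention of Lemma \ref{Lemma: Pairing map of measurable functions is meas} and Proposition \ref{Prop: Paring and composition}.
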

	\begin{proof}
	Since rings are set-theoretic models of the Lawvere theory $\Tcal_{\Ring}$ of rings described in Example \ref{Example: Lawvere of associative rings} and topological rings are topological models of $\Tcal_{\Ring}$, we simply apply Theorem \ref{Thm: Main thm} to the set $\Meas(X,R;\Bcal_{R})$. The final statements follow from the fact that $\R$ and $\C$ are topological rings.
	\end{proof}
	\begin{corollary}\label{Cor: Top group is group in set}
	Let $G$ be a topological group and let $(X,\Acal)$ be any measurable space. Then $\Meas(X,G;\Bcal_{G})$ is a group. In particular, for any local field $F$ the set $\Meas(X,\operatorname{GL}_n(F);\Bcal_{\operatorname{GL}_n(F)})$ is a group.
	\end{corollary}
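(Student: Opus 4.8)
The plan is to follow the proof of Corollary \ref{Cor: Top ring is ring in set} essentially verbatim, replacing the Lawvere theory $\Tcal_{\Ring}$ by the Lawvere theory $\Tcal_{\Grp}$ of groups from Example \ref{Example: Lawvere theory of gorups}. First I would recall that $\Set$-valued models of $\Tcal_{\Grp}$ are precisely groups (the equivalence $\Tcal_{\Grp}(\Set) \simeq \Grp$) and that $\Top$-valued models of $\Tcal_{\Grp}$ are precisely topological groups. Then, given a topological group $G$ and a measurable space $(X,\Acal)$, one simply applies Theorem \ref{Thm: Main thm} with $\Tcal = \Tcal_{\Grp}$ and $Y = G$: the theorem produces an object of $\Tcal_{\Grp}(\Set)_0$, which is to say a group structure on $\Meas(X,G;\Bcal_G)$.

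For the ``in particular'' clause it remains only to check that $\operatorname{GL}_n(F)$ is a topological group when $F$ is a local field. Here I would equip $F$ with its valuation topology, under which it is a topological field, so that $M_n(F) \cong F^{n^2}$ is a topological ring. The determinant $\det \colon M_n(F) \to F$ is a polynomial map, hence continuous, so $\operatorname{GL}_n(F) = \det^{-1}(F \setminus \{0\})$ is an open subspace of $M_n(F)$; matrix multiplication restricts from the continuous multiplication of $M_n(F)$, and Cramer's rule writes the entries of $A^{-1}$ as polynomials in the entries of $A$ divided by $\det A$, so inversion is continuous on the locus where $\det$ does not vanish. Hence $\operatorname{GL}_n(F)$ is a topological group, and the first part of the corollary applies.

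The main (and essentially only) obstacle is this last verification: continuity of multiplication on $\operatorname{GL}_n(F)$ is immediate, but one must use that $F$ is a topological \emph{field} --- which a local field is, for its valuation topology --- so that division by the nowhere-vanishing continuous function $\det$ keeps inversion continuous. Everything else is a direct appeal to Theorem \ref{Thm: Main thm}, exactly as in the ring case.
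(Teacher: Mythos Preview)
Your proposal is correct and follows essentially the same approach as the paper: apply Theorem \ref{Thm: Main thm} with $\Tcal_{\Grp}$ in place of $\Tcal_{\Ring}$, and then note that $\operatorname{GL}_n(F)$ is a topological group for $F$ a local field. The only difference is that you spell out the verification that $\operatorname{GL}_n(F)$ is a topological group (via the determinant and Cramer's rule), whereas the paper simply asserts that it ``naturally inherits the structure of a topological group from the topology on $F$.''
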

	\begin{proof}
	This follows mutatis mutandis to the proof of Corollary \ref{Cor: Top ring is ring in set} save with the Lawvere theory $\Tcal_{\Ring}$ replaced with the Lawvere theory $\Tcal_{\Grp}$ of groups as described in Example \ref{Example: Lawvere theory of gorups}. The final statement of the corollary follows from the fact that because $F$ is a local field $\operatorname{GL}_n(F)$ naturally inherits the structure of a topological group from the topology on $F$.
	\end{proof}

\bibliographystyle{amsplain}
\bibliography{MeasureBib}	
	
\end{document}